\journal{Automatica}
\newcommand{\beqnum}{\begin{equation}\begin{array}{lcl}}
\newcommand{\eeqnum}{\end{array}\end{equation}}
\newcommand{\beqnom}{\begin{eqnarray}}
\newcommand{\eeqnom}{\end{eqnarray}}
\newcommand{\beqnc}{\begin{center}\begin{eqnarray}}
\newcommand{\eeqnc}{\end{eqnarray}\end{center}}
\newcommand{\beqnlm}{\begin{equation}\vspace{-.5cm}\begin{array}{lll}}
\newcommand{\eeqnlm}{\end{array}\end{equation}}\vspace{-.5cm}
\newcommand{\beq}{\begin{eqnarray*}}
\newcommand{\eeq}{\end{eqnarray*}}
\newcommand{\bef}{\begin{figure}[tbh!]}
\newcommand{\enf}{\end{figure}}
\newcommand{\vep}{\varepsilon}
\newcommand{\R}{\mathbb{R}}
\newcommand{\lf}{\left\lfloor}
\newcommand{\rr}{\right\rceil}
\newtheorem{montheo}{\bf Theorem}
\newtheorem{rem}{\bf Remark}
\newtheorem{lemme}{\bf Lemma}
\newtheorem{Assumption}{\bf Assumption}
\newtheorem{defn}{\bf Definition}
\begin{document}

\begin{frontmatter}

\title{{{\color{black}A Lyapunov Approach to Barrier-Function Based Time-Varying Gains Higher Order Sliding Mode Controllers}}\tnoteref{mytitlenote}}
\tnotetext[mytitlenote]{ This research was partially supported by the iCODE Institute, research project of the IDEX Paris-Saclay, and by the Hadamard Mathematics LabEx (LMH) through the grant number ANR-11-LABX-0056-LMH in the ``Programme des Investissements d'Avenir''.}

\author[mymainaddress]{Salah Laghrouche
\corref{mycorrespondingauthor}}
\cortext[mycorrespondingauthor]{Corresponding author}
\ead{salah.laghrouche@utbm.fr}
\author[mysecondaddress]{Mohamed Harmouche} 
\ead{mohamed.harmouche@actility.com}
\author[mythirdaddress]{Yacine Chitour}
\ead{yacine.chitour@lss.supelec.fr}
\author[mymainaddress]{Hussein Obeid}
\ead{hussein.obeid@univ-fcomte.fr}
\author[myfourthaddress]{Leonid Fridman}
\ead{Lfridman@unam.mx}

\address[mymainaddress]{Femto-ST UMR CNRS, Univ. Bourgogne Franche-Comt\'{e}/UTBM, 90010, Belfort, France}
\address[mysecondaddress]{Actility, Paris, France}
\address[mythirdaddress]{ L2S - Universite Paris Saclay, CentraleSupelec, CNRS 91192 Gif-sur-Yvette, France}
\address[myfourthaddress]{Departement of Robotics and Control, Engineering Faculty, Universidad Nacional Aut\'{o}noma de M\'{e}xico (UNAM), D.F 04510. M\'{e}xico}

\begin{abstract}
In this paper, we present Lyapunov-based {\color{black}time varying} controllers for  {\color{black}fast} stabilization of a perturbed chain of integrators with bounded uncertainties. We refer to such controllers as {\color{black}time varying} higher order sliding mode controllers since they are designed for nonlinear Single-Input-Single-Output (SISO) systems with bounded uncertainties such that the uncertainty bounds are unknown.
{\color{black} We provide a time varying control feedback law insuring
verifying the following: there exists a family $(D(t))_{t\geq 0}$ of time varying open sets decreasing to the origin as $t$ tends to infinity, such that, for any unknown uncertainty bounds and trajectory $z(\cdot)$ of the corresponding system, there exists a positive positve $t_z$ for which $z(t_z)\in D(t_z)$ and $z(t)\in D(t)$ for $t\geq t_z$. 
} 
The effectiveness of these controllers is illustrated through simulations.
\end{abstract}

\begin{keyword}
\texttt {\color{black}Time varying} sliding mode\sep Lyapunov-based {\color{black}time varying} controllers \sep Finite time stabilization \sep Perturbed integrator chain \sep Unknown bounded uncertainties. 
\end{keyword}

\end{frontmatter}


\section{Introduction}

%

Sliding Mode Control is an efficient tool for matched uncertainties compensation. Higher Order Sliding Mode Controllers (HOSMCs) \cite{Levant2001,Levant2003,Levant2005,ding2016simple,CRUZZAVALA2017232,7445158} allow to reduce the dynamics of systems of order $n$ with relative degree $r$ till $(n-r)$. 
The homogeneity properties of HOSMCs ensure the $r$-th order of asymptotic precision with respect to the sampling step and parasitic dynamics \cite{Levant2001,Levant2003,Levant2005,5406047}. To implement HOSMCs, the knowledge of upper bounds for $r$-th derivatives is needed.

In majority of real systems with relative degree $r$, the upper bounds of $r$-th derivatives of the outputs exist but they are unknown \cite{Plestan2010,Bartolini_IMA2013, Mor2016ACTA,edwards2016adaptive,EDWARDS2016183,Oliveira2018,Hsu2018,negrete2016second,Shtessel,SHTESSEL2017229,OBEID2018540,doi:10.1002/rnc.4253,doi:10.1002/rnc.4105,2016.1184759}. So adaptive HOSMCs design should satisfy two contradictory requirements:
\begin{itemize}
\item ensure a finite-time exact convergence to the origin;
\item avoid overestimation of the control gain \cite{Plestan2010}.
\end{itemize}

Recently, adaptive sliding mode controllers have attracted the interest of many researchers dealing with these requirements \cite{Ferreira_CST2011, Bartolini_IMA2013, Mor2016ACTA}. 
For example, in  \cite{utkin2013adaptive,edwards2016adaptive,EDWARDS2016183}  
adaptive approaches which are based on the Utkin's concept of equivalent control have proposed to adapt first order, second order and HOSMC algorithms. 
{\color{black} However, the realization of this concept requires the knowledge of the equivalent control signal recovered from the filtration of the discontinuous control signal. It is possible to have this information if and only if the upper bound of the perturbations derivative is known. In this case, it is more useful to use continuous HOSMC.  }
 Inspired by this concept, an adaptive strategy has proposed in \cite{Oliveira2018} for first order sliding mode control. The advantages of this strategy are its simplicity and the possibility to be implemented in the case of
non smooth perturbations. Furthermore, with the aim of making less restrictive assumptions on the class of perturbations as well as maitaining the sliding mode, a monitoring function-based adaptive approach is proposed for first order sliding mode control in \cite{Hsu2018}.   

{ \color{black} On the other hand, another common approach which used dynamic gain adaptation is introduced by Huang et al. \cite{Huang2008} for first order sliding mode control. In this adaptation, the control gain increases until the sliding mode is achieved, and afterwards the gain becomes constant and stabilizes at unnecessarily large value. Inspired by this adaptation, Moreno et al. \cite{negrete2016second,Mor2016ACTA} have proposed an adaptation of different HOSMC algorithms that has the same aforementioned problem.  Plestan et al. \cite{Plestan2010,Plestan2012} have overcome this problem by decreasing the gains once the sliding mode is achieved. This method establishes real-sliding mode (convergence to a neighborhood of the sliding surface). However, it does not guarantee that the sliding variable would remain inside the neighborhood after convergence. In the field of HOSMC, Shtessel et al. \cite{Shtessel,SHTESSEL2017229} used this method to adapt super-twisting and twisting algorithms with non-overestimation of the control gains. In both algorithms, the sliding variable and its first time derivative converge to some unknown neighborhoods of zero.}

{ \color{black} Unlike these previous approaches, a barrier function-based adaptive strategy is proposed to overcome the problem of unknow domain of convergence  \cite{OBEID2018540}. Indeed, this strategy has been applied to adapt a first order sliding mode and super-twisting algorithms and it ensures the convergence of the output variable and maintains it in a predefined neighborhood of zero independent of the upper bounds of the perturbation and its derivative, without overestimating the control gain \cite{OBEID2018540,OBEID2019STC,OBEID2018IJC}. Inspired by this strategy, an adaptive twisting algorithm ensuring the convergence of the sliding variable and its first time derivative to some predefined neighborhoods of zero is presented in  \cite{8460272}. As a generalization of the results presented in \cite{OBEID2019STC}, an adaptive integral sliding mode control is designed in \cite{8619334} for a perturbed chain of integrators. This algorithm provides the convergence of the sliding variable and its $r-1$ first derivatives to some unknown neighborhoods of zero. To deal with this drawback, a barrier function-based dual layer strategy which can ensure the convergence of the sliding variable and its $r-1$ first derivatives to zero is proposed in  \cite{OBEID2019DHOSMC}. However, the two drawbacks of this strategy are that, \textcolor{black}{ on one hand, it cannot be applied in the case of time-dependent control gain, and on the other hand, the convergence can be lost if the perturbation grows suddenly, this means, the states can leave the origin and jump to a unknown big value before re-convergence.}}
  \\

In this paper, we present Lyapunov-based {\color{black}time varying} controllers for the finite time stabilization of a perturbed chain of integrators with bounded uncertainties. Through a minor extension of the definition (as explained in the next section), we refer to such controllers as {\color{black}time varying} HOSMCs. \textcolor{black}{ The proposed {\color{black}time varying} controller guarantees the asymptotic convergence to the origin 
i.e., it establishes real HOSM.} \textcolor{black}{ Indeed, it ensures the convergence of the states and their maintenance in a decreasing domain which tends to zero.} The main features of such {\color{black}time varying} controller can be summarized as follows: 

\begin{itemize} 
\item This controller can be extended to arbitrary order. 
\textcolor{black}{ \item The sliding variable and its $(r-1)$ first derivatives  converge in finite time to  a family of time varying open sets $(D(t))_{t\geq 0}$ decreasing to the origin as $t$ tends to infinity.
 \item Once a trajectory enters some $D(t_*)$ at time $t_*$, it remains trapped in the $D(t)$'s, i.e., $z(t)\in D(t)$ for $t\geq t_*$.
  \item The decrease to the origin of the family $(D(t))_{t\geq 0}$ can be chosen arbitrarily.
  \item We provide an explicit bound on the control which is best possible, i.e., (essentially) linear with respect to the uncertainty.}
\end{itemize}
The paper is organized as follows: problem formulation and {\color{black}time varying} controllers are presented in Section 2, simulation results which show the effectiveness of the proposed controllers are presented and discussed in Section 3. Some concluding remarks are given in Section 4.



\section{Higher Order Sliding Mode Controllers}

If $r$ is a positive integer, the perturbed chain of integrators of length $r$ corresponds to the (uncertain) control system given by 
\beqnum\label{u.l.s.}
\textcolor{black}{\dot z_i=z_{i+1},~ i=1,...,{r-1},}\quad \dot z_r =\varphi(t) + \gamma(t)u, 
\eeqnum
where $z=[z_1\  z_2\ ... z_r ]^T\in\mathbb{R}^r$, $u\in\mathbb{R}$ and the functions $\varphi$ and $\gamma$ are any measurable functions defined almost everywhere (a.e. for short) on $\mathbb{R}_+$ and bounded by positive constants $\bar \varphi$, $\gamma_m$ and $\gamma_M$, such that, for a.e. $t\geq 0$, 
\beqnum\label{bound}
\left| \varphi(t) \right| \le \bar \varphi,\quad
0 < \gamma_m\le \gamma(t) \le \gamma_M.
\eeqnum
One can equivalently define a perturbed chain of integrators of length $r$ as the differential inclusion $\dot z_r\in  I_{\bar\varphi}+u I_\gamma$ where $I_{\bar\varphi}=\left[-\bar\varphi , \bar\varphi \right]$ and $ I_\gamma=\left[\gamma_m , \gamma_M \right]$.

The usual objective regarding System \eqref{u.l.s.} consists of stabilizing it with respect to the origin in finite time, i.e., determining feedback laws $u=U(z)$ so that the trajectories of the corresponding closed-loop system converge to the origin in finite time. Note that, in general, the controllers $U(\cdot)$ are discontinuous and then, solutions of  \eqref{u.l.s.} need to be understood here in Filippov's sense \cite{Filippov}, i.e., the right-hand vector set is enlarged at the discontinuity points of the differential inclusion to the convex hull of the set of velocity vectors obtained by approaching $z$ from all directions in $\mathbb{R}^r$, while avoiding zero-measure sets. Several solutions for this problem exist \cite{Levant2001,Levant2003,Levant2005,ding2016simple,CRUZZAVALA2017232}. under the hypothesis that the bounds $\gamma_m,\gamma_M$ and $\bar \varphi$ are known. 

In case the bounds $\gamma_m,\gamma_M$ and $\bar \varphi$ are unknown (one only assumes their existence) then it is obvious to see that  finite time stabilization is not possible by a mere state feedback and therefore, one possible alternate objective consists in achieving {\color{black} asymptotic} stabilization. This is the goal of this paper to establish such a result for System \eqref{u.l.s.} and we provide next a precise definition of {\color{black} asymptotic}  stabilization.
\begin{defn}\label{def1} If $r,p$ are positive integers and $f:\mathbb{R}^r\times \mathbb{R}^p\rightrightarrows \mathbb{R}^r$ is a continuous differential equation, we say that the system $\dot z=f(z,u)$ is {\color{black} asymptotic} stabilizable if, 
{\color{black} there exists a continuous controller $u=U(\cdot,t)$ such that every trajectory of the closed-loop system $\dot z=f(z,U(\cdot,t))$ 
tends to the origin as $t$ tends to infinity.}
\end{defn}
The main result of that paper consists of designing controllers which {\color{black} asymptotically} stabilize System \eqref{u.l.s.} {\emph{independently} }of the positive bounds $\bar \varphi$, $\gamma_m$ and $\gamma_M$, i.e., 
{\color{black} the controllers $u=U(\cdot,t)$ which asymptotically stabilize System \eqref{u.l.s.} does not depend on the bounds $\bar \varphi$, $\gamma_m$ and $\gamma_M$.}

We next recall the following definition needed in the sequel.  
\begin{defn}\label{homogeneity} ({\bf Homogeneity.} cf. \cite{Levant_Springer}.)
If $r,m$ are positive integers, a function $f:\mathbb{R}^r\rightarrow \mathbb{R}^m$ (or a differential inclusion $F:\mathbb{R}^r\rightrightarrows \mathbb{R}^m$ respectively) is said to be {\it homogeneous of degree $q\in\mathbb{R}$ with respect to the family of dilations $\delta_\vep(z)$}, $\vep>0$, defined by 
$$
\delta_\vep(z)=(z_1,\cdots,z_r)\mapsto (\vep^{p_1}z_1,\cdots,\vep^{p_r}z_r),
$$
where $p_1,\cdots,p_r$ are positive real numbers (the weights), if for every positive $\vep$ and $z\in\mathbb{R}^r$, one has $f(\delta_\vep(z))=\vep^qf(z)$  $\big(F(\delta_\vep(z))=\vep^q\delta_\vep(F(z))\ respectively\big)$.
\end{defn}

The following notations will be used throughout the paper.
We define the function $ \hbox{sgn}$ as the multivalued function defined on $\mathbb{R}$ by $ \hbox{sgn}(x)=\frac x{\vert x\vert}$ for $x\neq 0$ and $ \hbox{sgn}(0)=[-1,1]$. Similarly, for every $a\geq 0$ and $x\in \mathbb{R}$, we use $\lf x\rr^a$ to denote $\left| x \right|^a  \hbox{sgn}(x)$. Note that $\lf \cdot\rr^a$ is a  continuous function for $a>0$ and of class $C^1$ with derivative equal to $a\left| \cdot \right|^{a-1}$ for $a\geq 1$. Moreover, for every positive integer $r$, we use $J_r$ to denote the $r$-th Jordan block, i.e., the $r\times r$ matrix whose $(i,j)$-coefficient is equal to $1$ if $i=j-1$ and zero otherwise. Finally, we use $e_r$ to denote the vector of $\mathbb{R}^r$ equal to $(0\ \cdots\ 0\ 1)^T$.

\subsection{{\color{black}Time Varying} Higher Order Sliding Mode Controller}\label{adaptatif}
We first define the system under study and provide parameters used later on. 
\begin{defn}\label{def0} {\it Let $r$ be a positive integer. The $r$-th order chain of integrator
$(CI)_r$ is the single-input control system given by 
\beqnum \label{pure_int}
(CI)_r\ \ \ \dot z=J_rz+u e_r,
\eeqnum
with $z=(z_1,\cdots,z_r)^T\in\mathbb{R}^r$ and $u\in \mathbb{R}$.
For $\kappa<0$ and $p>0$ with $p+(r+1)\kappa\in [0,1)$, set $p_i := p + (i-1)\kappa, \ 1\leq i\leq r+2$. For $\varepsilon>0$, let $\delta_\vep:\R^r\rightarrow \R^r$ be the family of dilations associated with $\left( p_1, \cdots , p_r \right)$.}
\end{defn}
In the spirit of \cite{Harmouche_CDC12}, we put forwards geometric conditions on certain stabilizing feedbacks $u_0(\cdot)$ for $(CI)_r$ and corresponding Lyapunov functions $V$. These conditions will be instrumental for the latter developments. 

Our construction of the feedback for practical stabilization relies on the following result. 
%
%
\begin{Assumption}\label{theo1}

Let $r$ be a positive integer. There exists a feedback law 
$u_0:\R^r\rightarrow\R$ 
homogeneous  with  respect to $(\delta_\vep)_{\vep>0}$ such that the closed-loop system $\dot z=J_rz+u_0(z)e_r$ is finite time globally asymptotically stable with respect to the origin and the following conditions hold true:
\begin{description}
	\item[$(i)$] the function $z\mapsto J_rz+u_0(z)e_r$ is homogeneous of degree $\kappa$ with respect to $(\delta_\vep)_{\vep>0}$ and there exists a continuous positive definite function $V\textcolor{black}{(z)}:\R^r\rightarrow \R_+$, $C^1$ except at the origin, homogeneous of positive degree with respect to $(\delta_\vep)_{\vep>0}$ such that there exists $c>0$ and $\alpha\in (0,1)$ for which the time derivative of $V\textcolor{black}{(z)}$ along non trivial trajectories of $\dot z=J_rz+u_0(z)e_r$ verifies 
	\beqnum \label{est-V1}
	\dot V\textcolor{black}{(z)} \le -cV\textcolor{black}{(z)}^\alpha;
	\eeqnum
		\item[$(ii)$] the function $z\mapsto u_0(z)\frac{\partial V\textcolor{black}{(z)}}{\partial z_r } $ is non positive over $\mathbb{R}^r$ and, for every non zero $z\in \mathbb{R}^r$ verifying $u_0(z)=0$, one has $\frac{\partial V\textcolor{black}{(z)}}{\partial z_r } =0$. As a consequence function $z\mapsto  \hbox{sgn}(u_0(z))\frac{\partial V\textcolor{black}{(z)}}{\partial z_r }$
		is well-defined over $\mathbb{R}^r\setminus\{0\}$ and non positive.
\end{description}
\end{Assumption}
%
%
\begin{rem}
Item $(i)$ of the above proposition is classical, see for instance \cite{Hong,Huang2005,Zavala_2014}. Item $(ii)$ considers a geometric condition on controllers verifying Item $(i)$, which was introduced in \cite{Harmouche_CDC12} and used in \cite{CHL15}. This geometric condition is indeed satisfied, for instance by Hong's controller, see \cite{CHL15} for other examples.
\end{rem}

Regarding our problem, we consider, for every $\vep>0$ the following controller:
\beqnum \label{eq_v}
\textcolor{black}{u}(z,z_0,t) = g(\vert u_0(z)\vert) u_0(z) +k\, \hbox{sgn}\big( u_0(z) )\textcolor{black}{\hat \varphi \big(t,V(z_0)\big)},
\eeqnum
where  $u_0$ and $V\textcolor{black}{(z)}$ are provided by Assumption~\ref{theo1}, $g:\mathbb{R}_+\rightarrow \mathbb{R}_+^*$ is an arbitrary increasing $C^1$ function tending to infinity as with $x$ tends to infinity and 
the {\color{black}time varying function $\hat \varphi$} is defined later.
\textcolor{black}{ 
To proceed, let us first introduce, for every positive real $\xi$, the function $F_\xi$ defined on $[0,\xi)$ by $F_\xi(x)=\frac{\xi}{\xi-x}$ for $0<x<\xi$. Moreover, consider 
$\eta:\mathbb{R}_+\rightarrow \mathbb{R}_+^*$, a 
non increasing $C^1$ function 
so that $\eta(0)=\varepsilon>0$ and $-\dot\eta\leq M\eta$, where $\varepsilon>0$ and  $M\geq 0$ are chosen so that $M\varepsilon^{1-\alpha}< c$. Moreover, we define the following family of time varying domains, for $t\geq 0$,
\beqnum\label{eq:domain}
D(t)
=\{z\in\mathbb{R}^r\ \mid\ V(z)\leq \eta(t)\}.
\eeqnum
Assume first that $V(z_0)\leq \varepsilon/2$. Then $\hat \varphi \big(t,V(z_0))$ is defined as $F_{\eta(t)}(V(z(t)))$ for any trajectory $z(\cdot)$ of System \eqref{u.l.s.} closed-looped by the feedback control law \eqref{eq_v} and starting at $z_0$, and so, as long as $z(\cdot)$ is defined. (Note that in this case there exists a non trivial interval of existence of such solutions since $F_\eta$ is continuous.)  In case  $V(z_0)> \varepsilon/2$, then $\hat \varphi \big(t,V(z_0))=t$ as long as any trajectory $z(\cdot)$ of System \eqref{u.l.s.} closed-looped by the feedback control law \eqref{eq_v} and starting at $z_0$ is defined and verifies $V(z(t))> \eta(t)/2$. If there exists a first time 
$\bar{t}(V(z_0))>0$ such that $V(z(t))=\eta(t)/2$, then 
$\hat \varphi \big(t,V(z_0))=F_{\eta(t)}(V(z(t)))$ for $t\geq \bar{t}(V(z_0))$, and so, as long as $z(\cdot)$ is defined. Note that $\bar{t}(V(z_0))$ actually also depends on the trajectory $z(\cdot)$ since the latter may not be unique. 
Hence, as long as a trajectory $z(\cdot)$ of the closed-loop system and starting at $z_0$ is defined, the {\color{black}time varying} function $\hat \varphi \big(t,V(z_0))$ is given by
\beqnum
\hat \varphi(t,x) &=& \left\{
\begin{array}{ccc}
t, & \text{ if } 0\leq t< \bar{t}(V(z_0)) ,& \\
F_{\eta(t)}(x),  &\text{ if }t\geq \bar{t}(V(z_0)),& 
\end{array}\label{def-fi}
\right.
\eeqnum
with the convention that $\bar{t}(V(z_0))=0$ if $V(z_0)\leq \varepsilon/2$
and $\bar{t}(V(z_0))=\infty$ if $z(\cdot)$ is defined  and verifies $V(z(t))>\eta(t)$ for all non negative times $t$.
\\
Here the positive function $g$ and the positive constant $k$ 
 are  gain parameter.
 }

The following theorem provides the main result for the {\color{black}time varying} controller ${\textcolor{black}u}$.
\begin{montheo}\label{theorem_adaptatif}
Let $r$ be a positive integer and System \eqref{u.l.s.} be the perturbed $r$-chain of integrators with unknown bounds $\gamma_m,\gamma_M$ and $\bar{\varphi}$. Let 
$u_0,V:\mathbb{R}^r\rightarrow \mathbb{R}_+$ be the feedback law and the continuous positive definite function defined respectively in Assumption~\ref{theo1}. For every 
$z_0\in\mathbb{R}^r$, consider any trajectory $z(\cdot)$ of System \eqref{u.l.s.} closed by the feedback control law \eqref{eq_v} verifying $z(0)=z_0$. Then, $z(\cdot)$ is defined for all non negative times, there exists a first time $\bar{t}(V(z_0))$ for which 
$V(z(t))\leq \eta(t)/2$ at $t=\bar{t}(V(z_0))$ and $z(t)\in D(t)$
for all $t>\bar{t}(V(z_0))$.
\end{montheo}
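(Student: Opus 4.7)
The plan is to use $V$ as a time varying Lyapunov-type certificate, splitting the analysis into the reaching phase (where $\hat\varphi=t$) and the barrier phase (where $\hat\varphi=F_{\eta(t)}(V)$), and showing in each that $V$ cannot cross the moving ceiling $\eta(t)$. First I would compute, along any Filippov trajectory of the closed-loop system,
\[
\dot V = \bigl(\nabla V(z)\cdot(J_rz+u_0(z)e_r)\bigr) + \bigl(\varphi(t)+\gamma(t)u-u_0(z)\bigr)\frac{\partial V}{\partial z_r}.
\]
The first bracket is $\le -cV^\alpha$ by Assumption~\ref{theo1}(i). For the second, set $L:=\bigl|\tfrac{\partial V}{\partial z_r}\bigr|$; Assumption~\ref{theo1}(ii) gives $\operatorname{sgn}(u_0)\tfrac{\partial V}{\partial z_r}=-L$ and $L=0$ whenever $u_0=0$. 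Substituting the definition \eqref{eq_v} of $u$ and using $|\varphi|\le\bar\varphi$, $\gamma\ge\gamma_m$, one obtains
\[
\dot V \le -cV^\alpha + L\Bigl(\bar\varphi + (1-\gamma g(|u_0|))^+ |u_0|\Bigr) - k\gamma_m L\,\hat\varphi(t,V(z_0)).
\]
The residual term $(1-\gamma g(|u_0|))^+ |u_0|$ is bounded by $g^{-1}(1/\gamma_m)$ since $g$ is increasing and tends to infinity; absorb everything into a single constant $C=C(\bar\varphi,\gamma_m)$.

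Next I would handle Phase~1, where $V(z_0)>\varepsilon/2$ and $\hat\varphi(t,V(z_0))=t$. The key estimate reduces to $\dot V \le -cV^\alpha + L(C-k\gamma_m t)$. Local existence of the Filippov solution follows from the standard theory; a priori boundedness of $V$ on $[0,t^*]$ with $t^*:=C/(k\gamma_m)$ is obtained using the homogeneity of $L$ and $V$ (so $L$ is dominated by a power of $V$, ruling out finite-time blow-up). For $t\ge t^*$ the second term is $\le 0$, hence $\dot V\le -cV^\alpha$ with $\alpha\in(0,1)$, so $V$ reaches zero in finite time if nothing else intervenes. Since $\eta(t)\ge\varepsilon e^{-Mt}>0$, there is therefore a first time $\bar t(V(z_0))$ at which $V(z(\bar t))=\eta(\bar t)/2$; this is the time from which the control law switches to the barrier expression.

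In Phase~2, $t\ge\bar t$, I would argue by contradiction that $V(z(t))<\eta(t)$ is preserved. Let $t_1>\bar t$ be the first time $V(z(t_1))=\eta(t_1)$. If $\tfrac{\partial V}{\partial z_r}(z(t_1))\neq 0$ then $L$ is bounded below by a positive constant on a neighborhood of $z(t_1)$, and the barrier term $-k\gamma_m L\,F_{\eta(t)}(V(z(t)))\to -\infty$ as $t\nearrow t_1$, forcing $\dot V\to -\infty$ and contradicting $V\nearrow\eta$. If instead $\tfrac{\partial V}{\partial z_r}(z(t_1))=0$, Assumption~\ref{theo1}(ii) gives $u_0(z(t_1))=0$, hence $L=0$, and the perturbation and barrier terms both vanish there, leaving $\dot V(t_1)\le -cV(z(t_1))^\alpha=-c\eta(t_1)^\alpha$; using $-\dot\eta\le M\eta$ and $M\varepsilon^{1-\alpha}<c$ with $\eta\le\varepsilon$, this yields $\dot V(t_1)-\dot\eta(t_1)\le \eta(t_1)^\alpha(-c+M\eta(t_1)^{1-\alpha})<0$, contradicting the fact that $V-\eta$ attains a zero from below at $t_1$. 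Once trapping is secured, completeness of $z(\cdot)$ follows from $z(t)\in D(t)\subset D(\bar t)$, which is bounded.

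The main obstacle I expect is the degenerate barrier case, i.e.\ preventing $V$ from crossing $\eta$ precisely through a point where $\tfrac{\partial V}{\partial z_r}=0$: at such points the barrier term disappears and only the unperturbed Lyapunov decay $-cV^\alpha$ is available to beat $-\dot\eta$. This is exactly where the sharp condition $M\varepsilon^{1-\alpha}<c$ is used, and some care is needed because $V$ is only $C^1$ off the origin and the underlying dynamics are a differential inclusion, so the comparison $\dot V-\dot\eta<0$ must be interpreted in a Dini/Filippov sense along the candidate crossing trajectory.
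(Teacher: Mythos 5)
Your proof is correct in substance and follows the same overall architecture as the paper's: the Lyapunov differential inequality from Assumption~\ref{theo1}, global existence from sublinear growth via homogeneity, the reaching phase by contradiction using $\hat\varphi=t$, and a barrier argument for trapping. The genuinely different piece is the trapping step. The paper never argues at the boundary $V=\eta$: it introduces the interior level $V_*(t)=(1-1/\bar\Phi)\eta(t)$ (the level at which $F_{\eta(t)}(V)=\bar\Phi$), observes that $\dot V\le -cV^\alpha$ holds a.e. on the whole region $\{V\ge V_*\}$, and shows that every zero of $V-V_*$ is a strict downward crossing because $-cV_*^\alpha+MV_*<0$ whenever $V_*<\varepsilon$; this yields the sharper containment \eqref{barPhi}, which is reused afterwards to bound $\hat\varphi$ by $\max(1,\bar\Phi)$ in the Lemma on the control magnitude. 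Your version instead argues at a first crossing of $\eta$ with a dichotomy on $\partial V/\partial z_r$, and two points there need repair. First, Item $(ii)$ of Assumption~\ref{theo1} states $u_0=0\Rightarrow\partial V/\partial z_r=0$, not the converse you invoke; fortunately you only need $L=0$, which is your case hypothesis, so nothing is lost. Second, in the degenerate case you evaluate $\dot V\le -cV^\alpha$ at the single instant $t_1$, where an a.e. differential inequality gives no information; the clean fix — which is exactly the paper's mechanism — is to note that for $t$ in a left neighborhood of $t_1$ one has $V(z(t))>V_*(t)$, hence $\hat\varphi\ge\bar\Phi$ and $\dot V\le -cV^\alpha$ a.e. there, and then to integrate $\dot V-\dot\eta\le-\eta^\alpha\left(c-M\eta^{1-\alpha}\right)(1+o(1))<0$ over that neighborhood. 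With that patch your case of nonvanishing $\partial V/\partial z_r$ becomes superfluous and the proof closes; as written, your argument establishes containment in $D(t)$ but not the quantitative estimate \eqref{barPhi} that the paper extracts from the same computation.
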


\subsection{Proof of Theorem \ref{theorem_adaptatif}}
We refer to $(S)$ as the closed-loop system defined by  \eqref{u.l.s.} and \eqref{eq_v}. 
The first issue we address is the existence of trajectories of $(S)$ starting at any initial condition $z_0\in\mathbb{R}^r$. Such an existence follows from the fact that the application $\mathbb{R}_+\times \mathbb{R}^r\rightarrow \mathbb{R}$, $(t,z)\mapsto \textcolor{black}{\hat \varphi (t,V(z))}$ is continuous. 

We next show that every trajectory of $(S)$ is defined for all non negative times. For that purpose, consider a non trivial trajectory $z(\cdot)$ and let $I_{z(\cdot)}$ be its (non trivial) domain of definition.  We obtain the following inequality for the time derivative of  $V(z(\cdot))$ on  $I_{z(\cdot)}$ by using Items $(i)$ and $(ii)$ of Assumption~\ref{theo1}. For a.e. $t\in I_{z(\cdot)}$, one gets

\beqnum \label{dot_V1}
	\dot V \hspace{-2mm}&=&  \hspace{-2mm}\frac{\partial V}{\partial z_1 }z_2 + ... + \frac{\partial V}{\partial z_r }  \left( \gamma  \left[ g(\vert u_0\vert)  u_0 +  k\hbox{sgn} (u_0) \textcolor{black}{\hat \varphi}  \right] + \varphi \right),\\
&\le& -c V^\alpha \!-\! \!\left|\frac{\partial V}{\partial z_r } \right| \!\! \Big( \! (\gamma_m g(\vert u_0\vert)-1) |u_0|  \!+\! k\gamma_m \textcolor{black}{\hat \varphi} \!-\! \bar \varphi \Big),\\
&\le& -c V ^\alpha - k\gamma_m\left|\frac{\partial V}{\partial z_r } \right| \left( \textcolor{black}{\hat \varphi} - \bar \Phi\right),
\eeqnum
\textcolor{black}{with $\bar \Phi:= \frac1{k\gamma_m}\left( \bar \varphi-h_m\right)$, where 
$$
h_m=\min\big(0,\min_{x\geq 0}x(\gamma_mg(x)-1)\big).
$$}
We thus have the differential inequality a.e. for $t\in I_{z(\cdot)}$
\beqnum\label{dot_V1-1}
\dot V \leq -c V^\alpha +C_1\bar \varphi V^{p_{r+2}},
\eeqnum
where $C_1$ is a positive constant independent of the trajectory $z(\cdot)$. Since $p_{r+2}\in [0,1)$, it is therefore immediate to deduce that there is no blow-up in finite time and thus $I_{z(\cdot)}=\mathbb{R}_+$.  \\
\textcolor{black}{The next step consists in showing the existence of a finite time $\bar{t}(V(z_0))$ and, for that purpose, we can assume with no loss of generality that $V(z_0)> \varepsilon/2$. The corresponding trajectory is defined as long as $V(z(t))>\eta(t)/2$, since in that case, the growth of the right-hand side of \eqref{u.l.s.} is sublinear with respect to the state variable $z$. Arguing by contradiction, one gets that $V(z(t))>\eta(t)/2$ for all non negative times, and hence $\dot V\leq -c V ^\alpha$ for $t\geq \bar \Phi$. The latter inequality yields convergence to the origine in finite time, which is a contradiction. Then, the existence of a finite $\bar{t}(V(z_0))$ is established. 
\\
One is left to prove that  $z(t)\in D(t)$ for $t>\bar{t}(V(z_0))$. Since ${\hat \varphi}=F_{\eta}>1$
on that time interval, one gets finite time convergence if $\bar \Phi\leq 1$, whatever the choice of $\eta$ is. Assume that $\bar \Phi>1$. For every $t>\bar{t}(V(z_0))$, let $V_*(t)$ be the unique solution in $(0,\eta(t)$ of the equation $F_{\eta(t)}(V_*(t))=\bar \Phi$, i.e., $V_*(t)=(1-1/\bar\Phi)\eta(t)$. Note that if $V(z(t))\geq V_*(t)$, then $\dot V\leq -c V ^\alpha$.
We will actually prove that 
\beqnum\label{barPhi}
V(z(t))\leq \max(\frac12,(1-1/\bar\Phi))\eta(t), \hbox{ for }t>\bar{t}(V(z_0)).
\eeqnum
and, if $\bar \Phi\geq 2$, then there exists $t'\geq \bar{t}(V(z_0))$ such that 
$V(z(t))\leq (1-1/\bar\Phi)\eta(t)$ for $t>t'$. Assume first that $V(z(t))\geq (1-1/\bar\Phi)\eta(t)$ at 
$t=\bar{t}(V(z_0))$. Then, $\dot V\leq -c V ^\alpha$ in a right neighborhood of $\bar{t}(V(z_0))$
and, by the previous argument by contradiction, there must exists a first time $t'\geq \bar{t}(V(z_0))$
so that $V(z(t))=(1-1/\bar\Phi)\eta(t)$ at $t=t'$. The time derivative of $t\mapsto V(z(t))-(1-1/\bar\Phi)\eta(t)$ at $t=t'$ is less than or equal to 
$$
-cV_*(t')^{\alpha}+MV_*(t')\leq -V_*(t')^{\alpha}(c-MV_*(t')^{1-\alpha})
$$
which is negative since $V_*(t')<\eta(0)=\varepsilon$. Note also that the previous inequality holds true at every time $s$ so that $V(z(s))=(1-1/\bar\Phi)\eta(s)$. 
Then, the zeros of $V-(1-1/\bar\Phi)\eta$ on $[T_m,\infty)$ are isolated and  $V-(1-1/\bar\Phi)\eta>0$ ($V-(1-1/\bar\Phi)\eta<0$ resp.) in a left (right resp.) neighborhood of $s$.
Hence $t'$ is the unique zero of $V-(1-1/\bar\Phi)\eta$ on $[\bar{t}(V(z_0)),\infty)$ on $[\bar\Phi)\eta(t),\infty)$ and the claim is proved. 
\\
Finally assume that $V(z(t))< (1-1/\bar\Phi)\eta(t)$ at $t=\bar{t}(V(z_0))$. By the previous computations, one gets that $V(z(t))< (1-1/\bar\Phi)\eta(t)$ for all $t\geq \bar{t}(V(z_0))$.
This concludes the proof of Theorem~\ref{theorem_adaptatif}.
}

 \begin{rem}\label{remarkg}
At the light of the above argument, one can see that, if the bounds of the incertainties are known, then one can choose the gain parameter $k$ in such a way that $ \bar\Phi\leq 1$ and hence  get finite time convergence to zero. In that way, our controller provides yet another  finite time stabilizer of the perturbed integrator with known bounds on the perturbations. 
\end{rem}

\textcolor{black}{
 \begin{rem}\label{remarkE} From \eqref{barPhi} and the choice $\eta(t)=\varepsilon e^{-Mt}$, one deduces that $V(z(t))<\varepsilon e^{-Mt}$ for $t\geq \bar{t}(V(z_0))$. In particular, the trajectory converges to the origin with the exponential rate $M>0$ which can be chosen at will.
 Moreover, it is easy to provide with the help of \eqref{dot_V1} upper bounds on $\bar{t}(V(z_0))$
 in terms of $\bar\Phi$ and $V(z_0)$.
\\
To be complete, one should emphasize that the result in Theorem~\ref{theorem_adaptatif} does guarantee that a trajectory entering in the neighborhood $\{z,\ V(z)\leq \eta(t)/2\}$ for the first time at $t=\bar{t}(V(z_0))$ will always remain in the larger neighborhood $\{z,\ V(z)< \eta(t)\}$ for $t\geq \bar{t}(V(z_0))$. 
\end{rem}
}
\textcolor{black}{
 \begin{rem}\label{remarkE} 
 One other way to diminish the delay time $\bar{t}(V(z_0))$ needed to enter into the neighborhoods $\{z,\ V(z)\leq \eta(t)\}$ consists in replacing the time $t$ in the definition of $\hat{\phi}$ given in \eqref{def-fi} by an increasing function $l(t)$ tending to infinity faster than a linear one. In that manner, inequalities such as $t> \bar\Phi$ is replaced by $l(t)> \bar\Phi$.
 The price to pay will be an larger upper bound for the gains.
 \end{rem}
}
\begin{rem}
Among different controllers that can fulfill Assumption~\ref{theo1}, Hong's controller \cite{Hong} can be used. This controller is defined as follows:
\\ Let $\kappa<0$ and $l_1,\cdots,l_r$ positive real numbers. For $z=(z_1,\cdots,z_r)$, we define $u_0 = v_r$ for $ i=0,...,r-1$:
\beqnum\label{Hongfunction}
\ v_0=0,\,
v_{i+1} = -l_{i+1} \lfloor\lfloor z_{i+1} \rceil^{\beta_i } - \lfloor v_i \rceil^{\beta_i } \rceil^{(\alpha_{i+1}/\beta_i)},
\eeqnum
where $p_i = 1+(i-1)\kappa, \; \beta_0 =p_2,\; (\beta_i + 1)p_{i+1} = \beta_0 + 1 > 0 \;\text{and} \; \alpha_i=\frac{p_{i+1}}{p_i}$.\\
Now, let $\psi_r(z_1, \cdots,z_r)=\lfloor z_{r} \rceil^{\beta_{r-1} } - \lfloor v_{r-1} \rceil^{\beta_{r-1} }$. Then according to \cite{harmouche2017stabilisation}, if $\kappa=-1/r$, $u_0= -l_r\,  \hbox{sgn}\Big(\psi_r(z_1, \cdots,z_r) \Big)$ is bounded and the {\color{black}time varying} controller $u$ can be written as
\beqnum\label{eq:robustcont}
u(z,t) = g(\vert u_0(z)\vert) u_0(z) +k\, \hbox{sgn}\big( u_0(z) )\hat \varphi (t,V(z)\big)\\ \quad \quad \quad =- \Big(g(\vert u_0(z)\vert) l_r +k \hat \varphi (t,V(z)\big)\,\Big) \hbox{sgn}\big(\psi_r(z_1, \cdots,z_r) \big)
\eeqnum
Here the assumption that $\lim_{x\rightarrow +\infty}g(x)=+\infty$ can be removed. \textcolor{black}{Indeed, in view of \eqref{dot_V1} and taking into account that $|u_0|=l_r$, one can remove this assumption and the result remains valid}. Moreover, in the case when the bounds of the uncertainties are known, the functions $g$ and $\hat \varphi$ can be chosen constants, which leads to homogeneous controller $u$. 

  \end{rem}
\subsection{Asymptotic bounds for the controller $u$} 
One deduces from Theorem~\ref{theorem_adaptatif} the following 
 \textcolor{black}{result which
 provides an asymptotic upper bound for the controller $u$ that does not exhibit any overestimation.} 

 \textcolor{black}{
\begin{lemme}
The controller $u$ defined in \eqref{eq_v} verifies the following asymptotic upper bound, which is uniform with respect to trajectories of the closed-loop system $(S)$: 
\beqnum
\limsup_{t\rightarrow \infty}{|u|} \le M(u_0)+k\max(1,\bar\Phi),
\eeqnum
where $M(u_0)=g(l_r)l_r$ if $u_0$ is homogeneous of degree zero, where $l_r$ is the supremum of $\vert u_0\vert$ over $\mathbb{R}^r$, and $M(u_0)=0$ if $u_0$ is homogeneous of positive degree.
\end{lemme}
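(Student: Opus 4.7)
The plan is to apply the triangle inequality $|u|\le g(|u_0(z)|)|u_0(z)|+k|\hat\varphi(t,V(z_0))|$ and to bound each summand asymptotically, exploiting the homogeneity of $u_0$ and the convergence estimate from Theorem \ref{theorem_adaptatif}. The resulting bound will depend only on $u_0$, $k$ and $\bar\Phi$, giving the announced uniformity in the trajectory of $(S)$.

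For the first summand I would distinguish the two cases appearing in the definition of $M(u_0)$. If $u_0$ is homogeneous of degree zero, then $|u_0|$ is bounded above by $l_r=\sup_{\mathbb{R}^r}|u_0|$ and, since $g$ is increasing, one has $g(|u_0|)|u_0|\le g(l_r)l_r=M(u_0)$ pointwise. If instead $u_0$ is homogeneous of positive degree, homogeneity yields $u_0(0)=0$; by Theorem \ref{theorem_adaptatif} and $\eta(t)\to 0$ one has $V(z(t))\to 0$ and hence $z(t)\to 0$ because $V$ is continuous and positive definite, so continuity of both $g$ and $u_0$ gives $g(|u_0(z(t))|)|u_0(z(t))|\to 0=M(u_0)$. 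In either case $\limsup_{t\to\infty}g(|u_0(z(t))|)|u_0(z(t))|\le M(u_0)$.

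For the second summand I would control $\limsup_{t\to\infty}\hat\varphi(t,V(z_0))$ by $\max(1,\bar\Phi)$. Once $t\ge\bar t(V(z_0))$, the definition gives $\hat\varphi=\eta(t)/(\eta(t)-V(z(t)))$, a strictly increasing function of the ratio $V(z(t))/\eta(t)$. If $\bar\Phi\le 1$, the differential inequality \eqref{dot_V1} yields $\dot V\le -cV^\alpha$ on $[\bar t(V(z_0)),\infty)$, so $V(z(t))$ vanishes in finite time and $\hat\varphi\to 1$. If $\bar\Phi>1$, setting $V_*(t)=(1-1/\bar\Phi)\eta(t)$ and reproducing the monotonicity computation from the proof of Theorem \ref{theorem_adaptatif}, one checks that at any time $s$ with $V(z(s))=V_*(s)$ the derivative of $V-V_*$ is bounded by $-cV_*^\alpha+MV_*<0$ (since $V_*\le\eta\le\varepsilon$ and $M\varepsilon^{1-\alpha}<c$); the zeros of $V-V_*$ are therefore isolated, $V$ falls below $V_*$ immediately to the right of each zero and can never return, and consequently $V(z(t))\le V_*(t)$ for all sufficiently large $t$, i.e. $\hat\varphi\le\bar\Phi$ eventually.

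The main obstacle is to carry out the $V_*$-monotonicity argument in the full range $\bar\Phi>1$ rather than only for $\bar\Phi\ge 2$ as explicitly recorded in the proof of Theorem \ref{theorem_adaptatif}: the coarse estimate $V\le\eta/2$ available when $1<\bar\Phi<2$ does not by itself yield the sharper $V\le V_*$. The computation, however, is identical, the only new point being that $V_*(t)<\varepsilon$ throughout, which follows from $\eta(0)=\varepsilon$ and the monotonicity of $\eta$. Adding the two estimates concludes $\limsup_{t\to\infty}|u(t)|\le M(u_0)+k\max(1,\bar\Phi)$, with the right-hand side manifestly independent of the trajectory.
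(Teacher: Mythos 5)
Your argument is correct and follows essentially the same route as the paper: the bound reduces to showing $\hat\varphi\leq\max(1,\bar\Phi)$ for $t$ large, via the dichotomy $\bar\Phi\leq 1$ (finite-time convergence, $\hat\varphi=1$ eventually) versus $\bar\Phi>1$ (the $V_*$-crossing argument borrowed from the proof of Theorem~\ref{theorem_adaptatif}). You are also right that the $V_*$ computation goes through for every $\bar\Phi>1$ and not only for $\bar\Phi\geq 2$, which is exactly the point the paper's one-line appeal to \eqref{barPhi} implicitly relies on in the range $1<\bar\Phi<2$.
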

 \begin{proof}
 It is enough to prove that $\hat\varphi(t,V(z(t)))\leq \max(1,\bar\Phi)$ for $t$ large enough. Indeed, a careful examination of the argument of Theorem \ref{theorem_adaptatif} shows that either $\bar\Phi\leq 1$, in which case one has convergence to zero in finite time and $\hat\varphi=1$ for $t$ large enough,
 or $\bar\Phi>1$, in which case \eqref{barPhi} holds true and then $\hat\varphi\leq \bar\Phi$ for $t$ large enough.  
 \end{proof}}


\section{Simulation Results}\label{simulation}

Consider the following third order system
\beqnum \label{eq:systemchain3}
	\dot z_1 &=& z_2,\\
	\dot z_2 &=&  z_3,\\
	\dot z_3 &=& \varphi + \gamma u,
\eeqnum
where $\gamma$ and $\varphi$ are \textit{discontinuous} bounded uncertainties defined as
\beqnum
	\varphi = 5\,  \hbox{sgn}(\cos(t)) - 20 \sin(2t), \quad |\varphi| \le 25 ,\\
	\gamma = 3 - 2 \,  \hbox{sgn}(\sin(3t)), \quad 0 < 1 \le \gamma \le 5,
\eeqnum
and $z_0=[4,4,-4]$. 
\\
In the following subsections, two cases will be considered: first the case when the bounds of the uncertainties are known, and second when they are unknown. 
\\
\textcolor{black}{The first case is provided to show that the controller \eqref{eq:robustcont} can be considered as HOSMC due to the reason that it provides $r$-th order of asymptotic precision with respect to the
sampling step}. \textcolor{black}{While in the second case, the effectiveness of the proposed approach to force the sliding variable and its $r-1$ first derivatives to the following family of time varying domains 
\beqnum
D(t)
=\{z\in\mathbb{R}^r\ \mid\ V(z)< \eta(t)\}.
\eeqnum
is studied using the controller \eqref{eq:robustcont}.}


\subsection{Case when the bounds of uncertainties are known }\label{sub1}
In this subsection, the control parameters of $u_0$ in \eqref{eq:robustcont} are tuned to the following values
\beqnum
		l_1=1,\ l_2=2,\ l_3 = 5,\ \kappa = -1/3,
\eeqnum
the constant $k$ and the function $g$, $\hat \varphi$ in \eqref{eq:robustcont} are selected as $k={\bar \varphi \over{\gamma_m}}$, $g={1\over{\gamma_m}}$, $\hat \varphi=1$. Hence, 
\beqnum \label{eq:robsut}
u=  -{l_3 + \bar \varphi \over{\gamma_m}}\, \hbox{sgn}\Big( \psi_3(z_1,z_2,z_3)\Big) ,
\eeqnum
with $\psi_3(z_1,z_2,z_3)$
 is given explicitly in appendix~\ref{appendix:1}.

\begin{figure}[tbp]
\includegraphics[trim= 0.5cm 3.cm 0.5cm 2.0cm, clip, width=9cm]{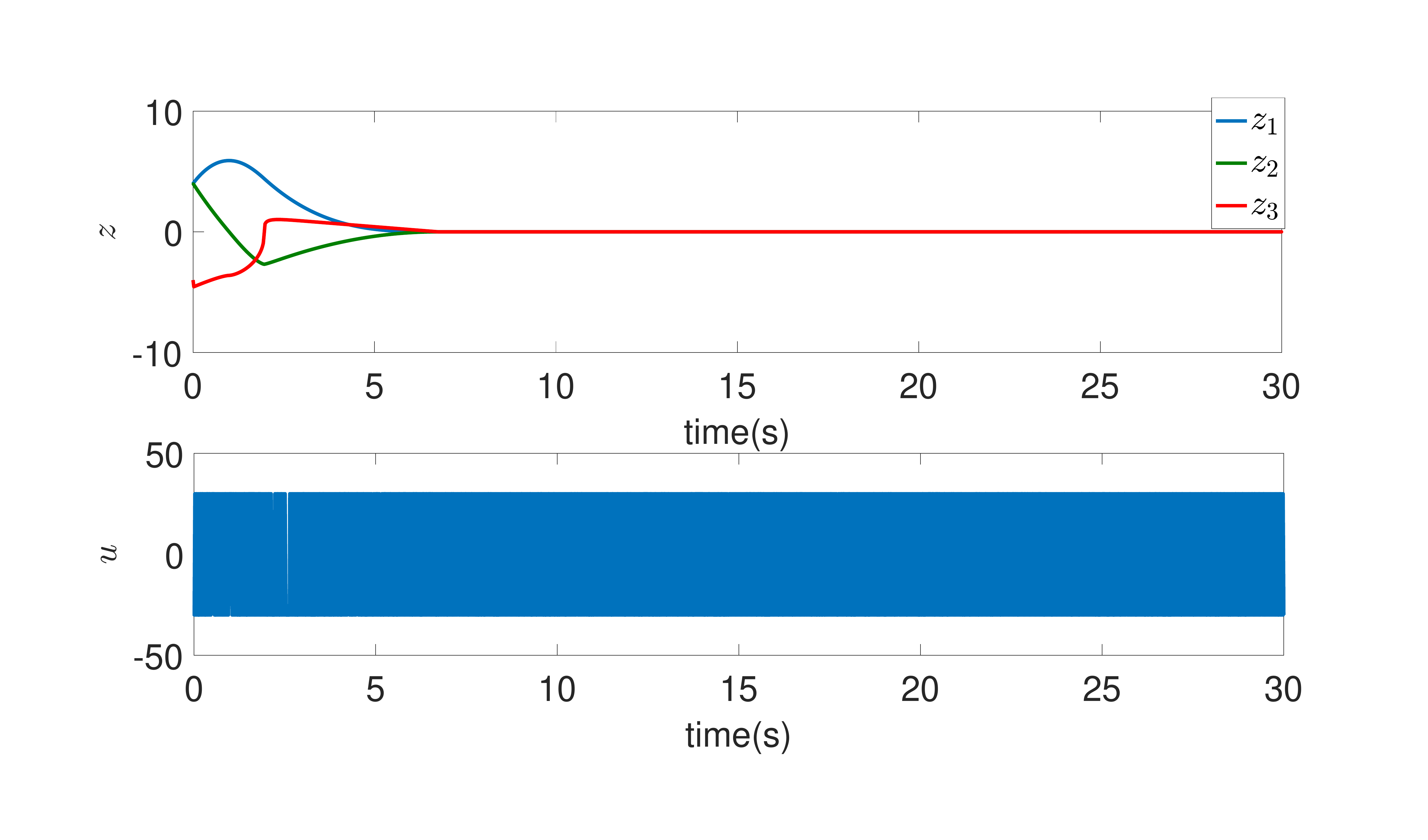}%
\caption{System \eqref{eq:systemchain3} in close loop with \eqref{eq:robsut}. }
\label{fig:robust1}
\end{figure}

\begin{figure}[tbp]
\includegraphics[trim= 0.5cm 3.cm 0.5cm 2.0cm, clip, width=9cm]{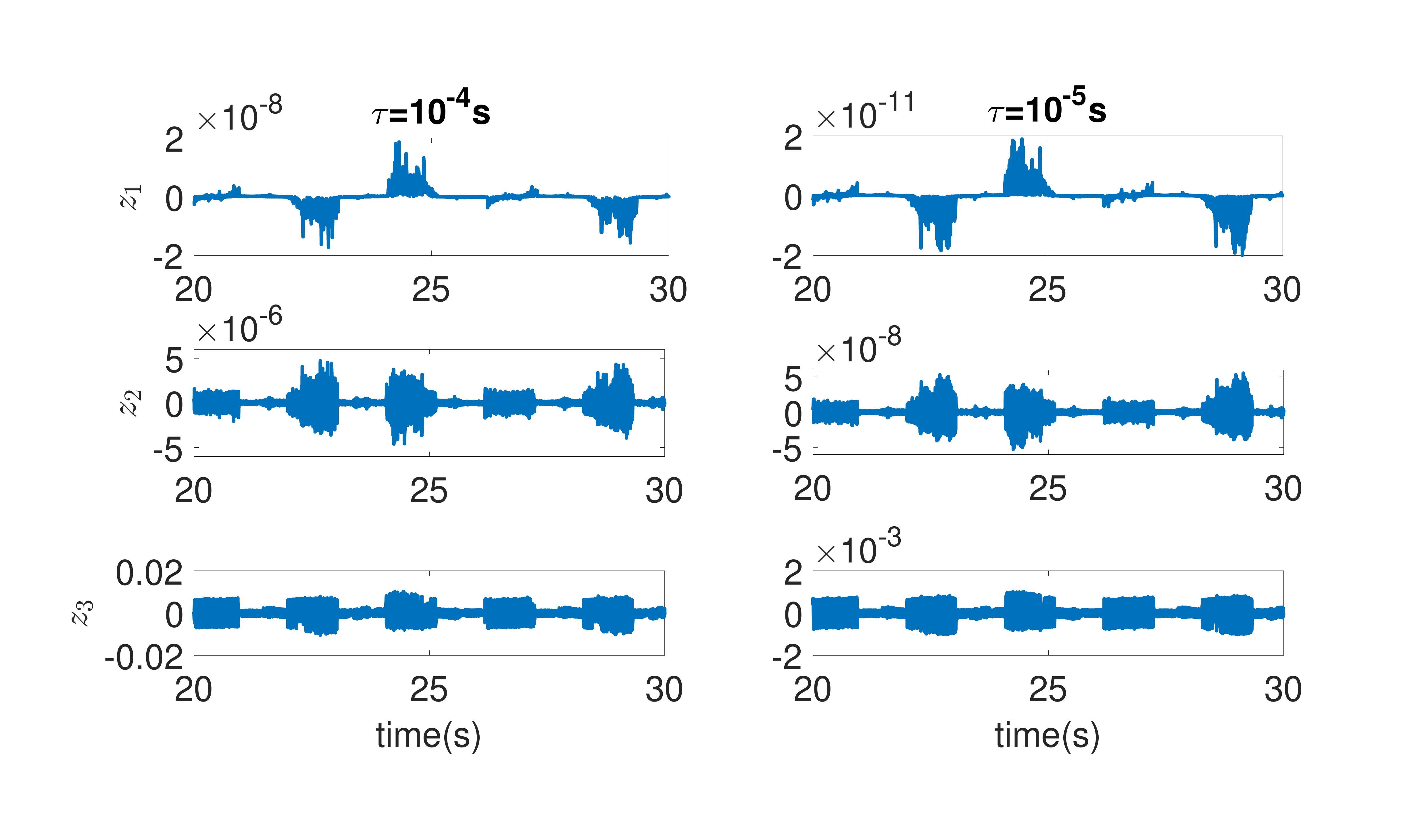}%
\caption{Accuracy of System \eqref{eq:systemchain3} in close loop with \eqref{eq:robsut} for $\tau=10^{-4}~s$ and $\tau=10^{-5}~s$. }
\label{fig:robust2}
\end{figure}

Fig~\ref{fig:robust1} shows the simulation results of system \eqref{eq:systemchain3} with controller \eqref{eq:robsut}. We can see that the states $z_1$, $z_2$, $z_3$ converge in a finite time to the origin, and the control input $u$ is discontinuous.

The system homogeneity degree is $\kappa=-1/3$ and the homogeneity weights of $z_1$, $z_2$, $z_3$ are $1$, $2/3$, $1/3$ respectively. These weights can be multiplied by 3 in order to achieve homogeneity weights $3$, $2$, $1$ with the closed loop system homogeneity degree $\kappa=-1$. This homogeneity is called 3-sliding homogeneity \cite{Levant2005}. According to these homogeneity weights, the controller \eqref{eq:robsut} provides the following accuracy for the states w.r.t sampling step $\tau$\begin{equation}  \label{eq:pres1}
|z_1| \le \lambda_1 {\tau}^3, \quad |z_2| \le \lambda_2 {\tau}^2, \quad |z_3| \le \lambda_3 {\tau},
\end{equation} 
where $\lambda_i$ are constants. By simulations shown in Fig~\ref{fig:robust2} with ${\tau}=10^{-4}s$, constants $\lambda_i$ are determined as $\lambda_1=20000$, $\lambda_2=500$, and $\lambda_3=200$. These constants have been confirmed by simulations with ${\tau}=10^{-5}s$ also shown in Fig~\ref{fig:robust2}.\\ 

\begin{figure}[tbp]
\includegraphics[trim= 0.5cm 3.cm 0.5cm 2.0cm, clip, width=9cm]{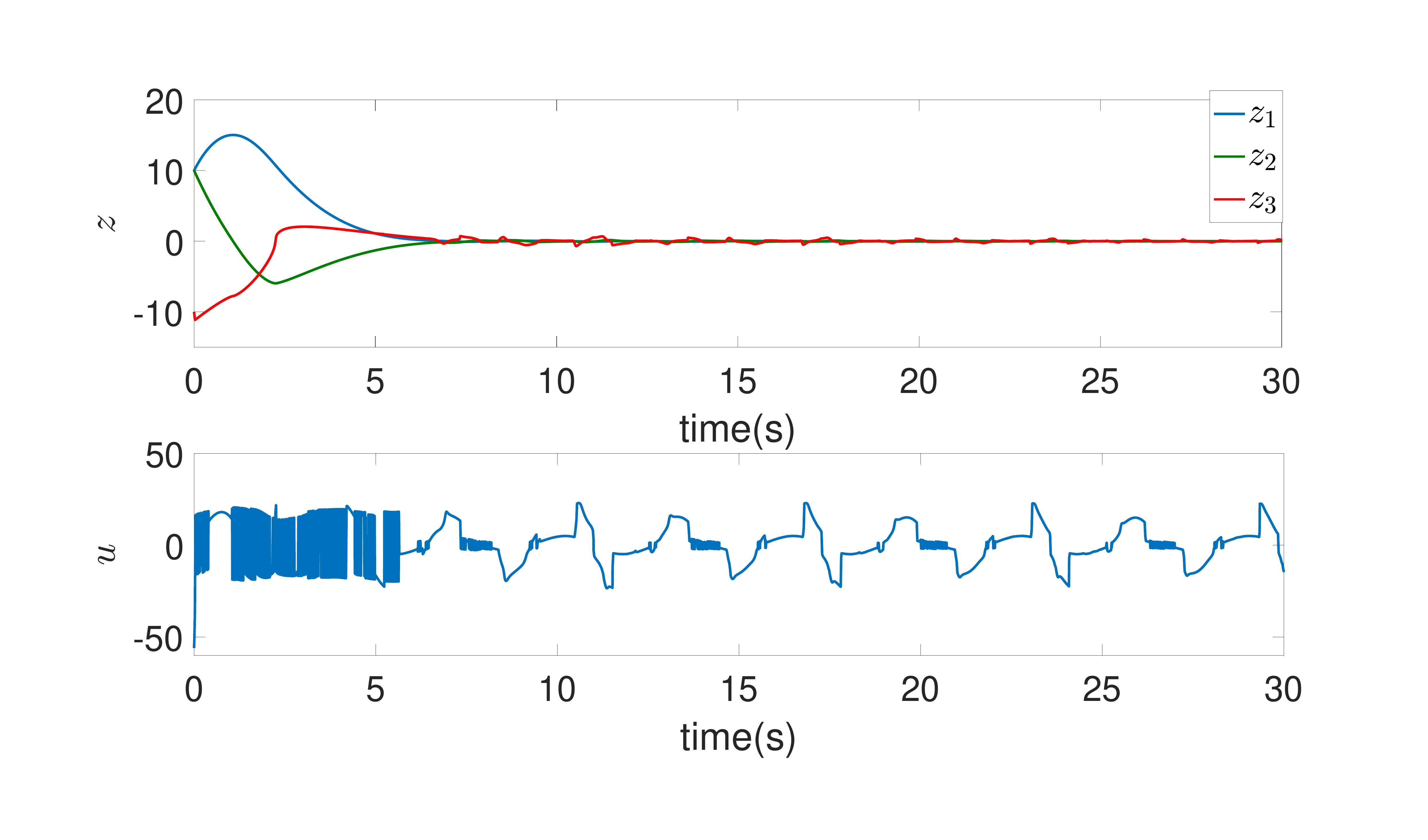}%
\caption{System \eqref{eq:systemchain3} in close loop with \eqref{eq:adapthong}. }
\label{fig:hongsystem}
\end{figure}

\subsection{Case when the bounds of uncertainties are unknown} \label{sub2}

{\color{black} In this case, the control parameters of $u_0$ are the same as in subsection~\ref{sub1}, while the constant $k$ and the functions $g$ and $\hat \varphi$ in \eqref{eq:robustcont} are selected as $k=1$, $g=1$ and $\hat \varphi$ follows \eqref{def-fi}  where $\eta(t)= e^{-0.2 t}$.} Hence, $u$ can be written as
\beqnum  \label{eq:adapthong}
u =-\Big( l_3 + \hat \varphi(t,V(z)\big)\,\Big)\, \hbox{sgn}\Big( \psi_3(z_1,z_2,z_3)\Big),
\eeqnum
with the Lyapunov Function candidate $V(z)$ is given by (see appendix~\ref{appendix:1})
\begin{align*}
\begin{split} 
V(z)  & = \frac{3}{5}{\left| {{z_1}} \right|^{\frac{5}{3}}}+ \frac{2}{5}{\left| {{z_2}} \right|^{\frac{5}{2}}} + \frac{3}{5}l_1^{\frac{5}{2}}| {{z_1}} |^{\frac{5}{3}} + l_1^{\frac{3}{2}}{z_2}{\lfloor {{z_1}}\rceil}.+ \frac{1}{5}{\left| {{z_3}} \right|^5} + 
\\& \frac{4}{5}l_2^5{\left\lfloor {{{\left\lfloor {{z_2}} \right\rceil }^{\frac{3}{2}}} + l_1^{\frac{3}{2}}{{\left\lfloor {{z_1}} \right\rceil }}} \right\rceil ^{\frac{5}{3}}} + {z_3}l_2^4\left\lfloor { {{{\left\lfloor {{z_2}} \right\rceil }^{\frac{3}{2}}} + l_1^{\frac{3}{2}}{{\left\lfloor {{z_1}} \right\rceil }}} } \right\rceil ^{\frac{4}{3}}.
\end{split} 
\end{align*}

\begin{figure}[tbp]
\includegraphics[trim= 0.5cm 3.cm 0.5cm 2.0cm, clip, width=9cm]{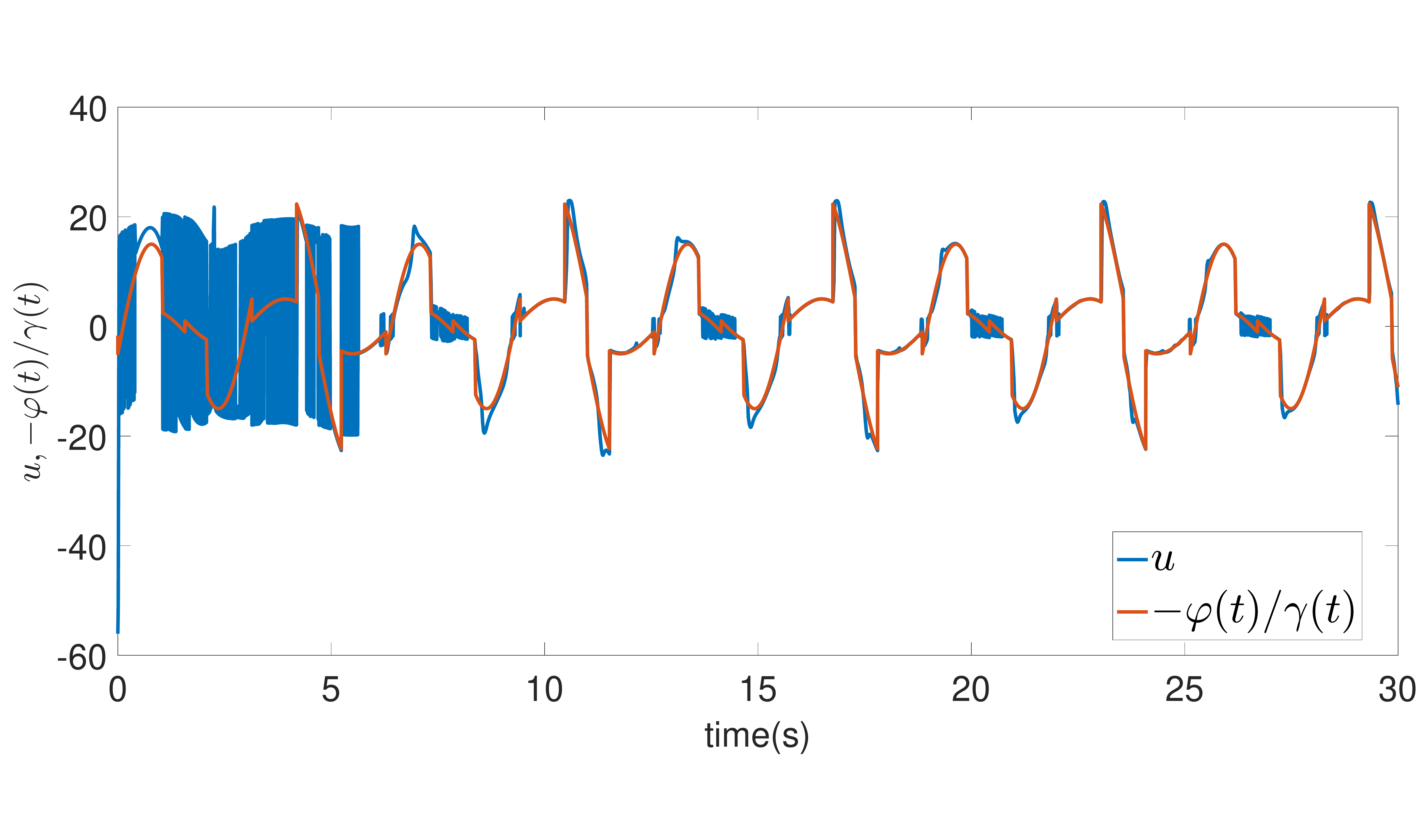}%
\caption{Evolution of the control signal and the uncertainties for System \eqref{eq:systemchain3} with \eqref{eq:adapthong}}
\label{fig:control}
\end{figure}

\begin{figure}[tbp]
\includegraphics[trim= 0.5cm 3.cm 0.5cm 2.0cm, clip, width=9cm]{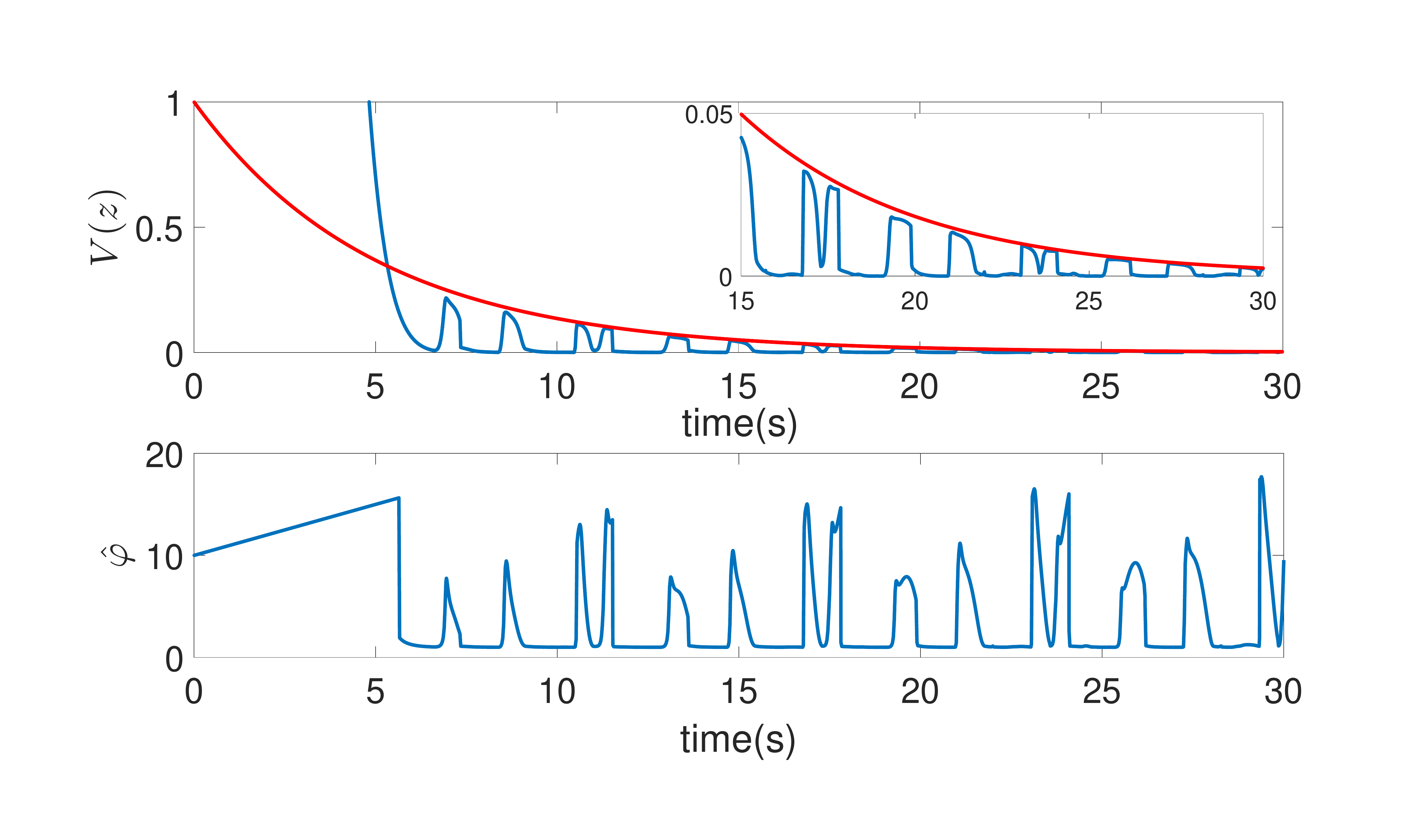}%
\caption{Evolution of the Lyapunov function $V(z)$ and the time varying function $\hat \varphi(t,V(z)\big)$ for System \eqref{eq:systemchain3} with \eqref{eq:adapthong}. }
\label{fig:lyqpunov}
\end{figure}

{\color{black} Figs~\ref{fig:hongsystem}-\ref{fig:control}-\ref{fig:lyqpunov} illustrate the simulation results of system \eqref{eq:systemchain3} with the time varying controller \eqref{eq:adapthong}. It can be noticed in Fig~\ref{fig:hongsystem} that the proposed controller provides the practical stabilization for system \eqref{eq:systemchain3}, and the control input is not continuous in general. Moreover, it is confirmed in Fig~\ref{fig:control} that the control signal is not overestimated. Indeed, after the time instant $\bar{t}(V(z_0))$ the control signal closely follows the uncertainties. On the other hand, the fulfillment of the control objective can be shown in Fig~\ref{fig:lyqpunov}, where the sliding variable and its $r-1$ first derivatives enter in finite time in the family of time varying domains $D(t)$ defined in \eqref{eq:domain} and cannot exit it anymore for larger times. It can be seen also that the time varying function $\hat \varphi(t,V(z)\big)$ increases linearly until the time instant $\bar{t}(V(z_0))$ and then starts to follow the function $F_{\eta(t)}(x)$.
}

\section{Conclusions}
\noindent \textcolor{black}{ This paper has proposed a new Lyapunov-based time varying scheme for higher-order sliding mode controller applied for a class of perturbed chain of integrators with unknown bounded uncertainties. The proposed time varying controller guarantees $(a)$ the finite time convergence to  a family of time varying open sets $(D(t))_{t\geq 0}$ decreasing to the origin as $t$ tends to infinity; $(b)$ once a trajectory enters some $D(t_*)$ at time $t_*$, it remains trapped in the $D(t)$'s, i.e., $z(t)\in D(t)$ for $t\geq t_*$. Another advantage of this time varying controller consists in the fact that the decrease to the origin of the family $(D(t))_{t\geq 0}$ can be chosen arbitrarily.} 

\section*{References}

\bibliographystyle{plain}
\bibliography{BAHOSMC_BIB}

\begin{appendices}
\section{$u_0$ and the Lyapunov function $V$ design \cite{Hong}} \label{appendix:1}

Firstly, the controller $u_0$ is determined explicitly. \\Let the system homogeneity degree $\kappa =  - \frac{1}{3}$. Then the homogeneity weights for the states $z_1$, $z_2$, $z_3$ are $1$, $\frac{2}{3}$, $\frac{1}{3}$ respectively and the constants $\beta_i$ are ${\beta _0} = \frac{2}{3}$, ${\beta _1} = \frac{3}{2}$, ${\beta _2} = 4$.
According to \cite{Hong}, $u_0=v_3$ with
\begin{align} \label{eq:induction}
	\left\{ {\begin{array}{*{20}{c}}
			{{v_0} = 0}, \\ 
			{\begin{array}{*{20}{c}}
					{{v_{i + 1}} =  - {l_{i + 1}}{{\left\lfloor {{{\left\lfloor {{z_{i + 1}}} \right\rceil }^{{\beta _i}}} - {{\left\lfloor {{v_i}} \right\rceil }^{{\beta _i}}}} \right\rceil }^{\frac{{{p_{\left( {i + 1} \right)}} + \kappa }}{{{p_{\left( {i} \right)}}}} \times \frac{1}{{{\beta _i}}}}}}, &{i = 0,1,2.} 
			\end{array}} 
	\end{array}} \right.
\end{align}
From \eqref{eq:induction} 
\begin{align}
	v_1 =  - {l_1}{\left\lfloor {{z_1}} \right\rceil ^{\frac{{{p_2}}}{{{p_1}}}}} =  - {l_1}{\left\lfloor {{z_1}} \right\rceil ^{\frac{2}{3}}}.
\end{align}
Then 
\begin{align}
v_2 & =  - {l_2}{\left\lfloor {{{\left\lfloor {{z_2}} \right\rceil }^{{\beta _1}}} - {{\left\lfloor {{v_1}} \right\rceil }^{{\beta _1}}}} \right\rceil ^{\frac{{{p_2} + \kappa }}{{{p_2}}} \times \frac{1}{{{\beta _1}}}}} \\ &=   - {l_2}{\left\lfloor {{{\left\lfloor {{z_2}} \right\rceil }^{\frac{3}{2}}} + l_1^{\frac{3}{2}}{{\left\lfloor {{z_1}} \right\rceil }}} \right\rceil ^{\frac{1}{3}}}.
\end{align}
Finally
\begin{align}
	{v_3} & =  - {l_3}{\left\lfloor {{{\left\lfloor {{z_3}} \right\rceil }^{{\beta _2}}} - {{\left\lfloor {{v_2}} \right\rceil }^{{\beta _2}}}} \right\rceil ^{\frac{{{p_3} +\kappa }}{{{p_3}}} \times \frac{1}{{{\beta _2}}}}} \\ &=- {l_3}{\left\lfloor {{{\left\lfloor {{z_3}} \right\rceil }^4}+ {l_2}^4{\left\lfloor {{{\left\lfloor {{z_2}} \right\rceil }^{\frac{3}{2}}} + l_1^{\frac{3}{2}}{{\left\lfloor {{z_1}} \right\rceil }}} \right\rceil ^{\frac{4}{3}}}} \right\rceil ^0} \\ & =-l_3 sign\Big( \psi_3(z_1,z_2,z_3)\Big).
\end{align}
where $\psi_3(z_1,z_2,z_3)= {{{\left\lfloor {{z_3}} \right\rceil }^4}+ {l_2}^4{\left\lfloor {{{\left\lfloor {{z_2}} \right\rceil }^{\frac{3}{2}}} + l_1^{\frac{3}{2}}{{\left\lfloor {{z_1}} \right\rceil }}} \right\rceil ^{\frac{4}{3}}}}$. Therefore, the controller $u_0$ can be expressed as
\[ \Rightarrow {u_0} =  - {l_3}sign\Big( \psi_3(z_1,z_2,z_3)\Big)\]
Now, the Lyapunov function $V$ is given. From \cite{Hong}, the Lyapunov function is defined in the following form
\begin{align} \label{Lyap-Hong}  
	\left\{ {\begin{array}{*{20}{c}}
			{V}_i =w_i+ V_{i-1}, \quad i=1,2,3 \\ 
			{\begin{array}{*{20}{c}}
					{w}_i =  \int\limits_{{v_{i - 1}}}^{{z_i}} {{{\left\lfloor s \right\rceil }^{{\beta _{i - 1}}}} - {{\left\lfloor {{v_{i - 1}}} \right\rceil }^{{\beta _{i - 1}}}}ds}.
			\end{array}} 
	\end{array}} \right.
\end{align}
 In view of \eqref{Lyap-Hong}  
\begin{equation*}
 {w_1} = \frac{1}{{1 + {p_2}}}{\left| {{z_1}} \right|^{1 + {r_2}}} = \frac{1}{{1 + \frac{2}{3}}}{\left| {{z_1}} \right|^{1 + \frac{2}{3}}},
\end{equation*}
 which leads
\begin{align}
 V_1 = {w_1} = \frac{3}{5}{\left| {{z_1}} \right|^{\frac{5}{3}}}.
\end{align}
Then
\begin{align}
\begin{split} 
 {w_2} & = \int\limits_{{v_1}}^{{z_2}} {{{\lfloor s \rceil }^{{\beta _1}}} - {{\lfloor {{v_1}} \rceil }^{{\beta _1}}}} ds \\& = \frac{1}{{{\beta _1} + 1}}( {{{| {{z_2}} |}^{{\beta _1} + 1}} + {\beta _1}{{| {{v_1}} |}^{{\beta _1} + 1}}} ) - {z_2}{\lfloor {{v_1}} \rceil^{{\beta _1}}}  \\&
= \frac{1}{{\frac{3}{2} + 1}}( {{{| {{z_2}} |}^{\frac{3}{2} + 1}} + \frac{3}{2}{{\lfloor {{v_1}} \rceil}^{\frac{3}{2} + 1}}} ) - {z_2}{\lfloor {{v_1}}\rceil^{\frac{3}{2}}} ,
\end{split}
\end{align}
with ${v_1} =  - {l_1}{\left\lfloor {{z_1}} \right\rceil ^{\frac{2}{3}}}$, it implies that 
\begin{align}
{w_2} = \frac{2}{5}{\left| {{z_2}} \right|^{\frac{5}{2}}} + \frac{3}{5}l_1^{\frac{5}{2}}| {{z_1}} |^{\frac{5}{3}} + l_1^{\frac{3}{2}}{z_2}{\lfloor {{z_1}}\rceil}.
\end{align}
Therefore,
\begin{align}
\begin{split}
{ V_2} &= { V_1} + w_2
\\& = \frac{3}{5}{\left| {{z_1}} \right|^{\frac{5}{3}}}+ \frac{2}{5}{\left| {{z_2}} \right|^{\frac{5}{2}}} + \frac{3}{5}l_1^{\frac{5}{2}}| {{z_1}} |^{\frac{5}{3}} + l_1^{\frac{3}{2}}{z_2}{\lfloor {{z_1}}\rceil}.
	\end{split} 
\end{align}
Finally, 
\begin{align} 
\begin{split} 
{w_3} &= \int\limits_{{v_2}}^{{z_3}} {{{\lfloor s \rceil }^{{\beta _2}}} - {{\lfloor {{v_2}} \rceil }^{{\beta _2}}}} ds \\&
= \frac{1}{{{\beta _2} + 1}}( {{{| {{z_3}} |}^{{\beta _2} + 1}} + {\beta _2}{{| {{v_2}} |}^{{\beta _2} + 1}}} ) - {z_3}{\lfloor {{v_2}} \rceil ^{{\beta _2}}} ,
\end{split} 
 \end{align}
with ${\beta _2} = 4$; ${v_2} =  - {l_2}{\left\lfloor {{{\left\lfloor {{z_2}} \right\rceil }^{\frac{3}{2}}} + l_1^{\frac{3}{2}}{{\left\lfloor {{z_1}} \right\rceil }}} \right\rceil ^{\frac{1}{3}}}$, it leads
\begin{align}
\begin{split} 
{w_3} &= \frac{1}{5}\left( {{{\left| {{z_3}} \right|}^5} + 4 l_2^5{\left\lfloor {{{\left\lfloor {{z_2}} \right\rceil }^{\frac{3}{2}}} + l_1^{\frac{3}{2}}{{\left\lfloor {{z_1}} \right\rceil }}} \right\rceil ^{\frac{5}{3}}}} \right) \\&+ {z_3}l_2^4\left\lfloor { {{{\left\lfloor {{z_2}} \right\rceil }^{\frac{3}{2}}} + l_1^{\frac{3}{2}}{{\left\lfloor {{z_1}} \right\rceil }}} } \right\rceil ^{\frac{4}{3}}	
\\& = \frac{1}{5}{\left| {{z_3}} \right|^5} + \frac{4}{5}l_2^5{\left\lfloor {{{\left\lfloor {{z_2}} \right\rceil }^{\frac{3}{2}}} + l_1^{\frac{3}{2}}{{\left\lfloor {{z_1}} \right\rceil }}} \right\rceil ^{\frac{5}{3}}} \\&+ {z_3}l_2^4\left\lfloor { {{{\left\lfloor {{z_2}} \right\rceil }^{\frac{3}{2}}} + l_1^{\frac{3}{2}}{{\left\lfloor {{z_1}} \right\rceil }}} } \right\rceil ^{\frac{4}{3}}	.
\end{split} 
\end{align}
Thus, the Lyapunov function $V$ is given by
\begin{align}
\begin{split} 
V ={ V_3} & = \frac{3}{5}{\left| {{z_1}} \right|^{\frac{5}{3}}}+ \frac{2}{5}{\left| {{z_2}} \right|^{\frac{5}{2}}} + \frac{3}{5}l_1^{\frac{5}{2}}| {{z_1}} |^{\frac{5}{3}} + l_1^{\frac{3}{2}}{z_2}{\lfloor {{z_1}}\rceil}+ \frac{1}{5}{\left| {{z_3}} \right|^5} + 
\\& \frac{4}{5}l_2^5{\left\lfloor {{{\left\lfloor {{z_2}} \right\rceil }^{\frac{3}{2}}} + l_1^{\frac{3}{2}}{{\left\lfloor {{z_1}} \right\rceil }}} \right\rceil ^{\frac{5}{3}}} + {z_3}l_2^4\left\lfloor { {{{\left\lfloor {{z_2}} \right\rceil }^{\frac{3}{2}}} + l_1^{\frac{3}{2}}{{\left\lfloor {{z_1}} \right\rceil }}} } \right\rceil ^{\frac{4}{3}}.
\end{split} 
\end{align}

\end{appendices}
\end{document}